\numberwithin{equation}{section}
\theoremstyle{plain}
\newtheorem{theorem}{Theorem}[section]
\theoremstyle{definition}
\newtheorem{remark}[theorem]{Remark}
\newtheorem{definition}[theorem]{Definition}
\newtheorem{thrmdefinition}[theorem]{Theorem-Definition}
\newtheorem{exmp}[theorem]{Example}
\theoremstyle{plain}
\newtheorem{proposition}[theorem]{Proposition}
\theoremstyle{plain}
\newtheorem{lemma}[theorem]{Lemma}
\theoremstyle{plain}
\DeclareMathOperator{\reg}{reg}
\DeclareMathOperator{\dimn}{dim}
\DeclareMathOperator{\hilb}{Hilb}
\DeclareMathOperator{\pic}{Pic}
\DeclareMathOperator{\gr}{Gr}
\DeclareMathOperator{\tors}{tors}
\DeclareMathOperator{\sym}{Sym}
\DeclareMathOperator{\cox}{Cox}
\DeclareMathOperator{\nef}{Nef}
\DeclareMathOperator{\eff}{\overline{Eff}}
\title{Gotzmann's persistence theorem for Mori dream spaces}
\author{Patience Ablett}
\address{Warwick Mathematics Institute, University of Warwick, Coventry, CV4 7EZ}
\email{patience.ablett@warwick.ac.uk}
\begin{document}

\maketitle
\begin{abstract}
    Gotzmann’s persistence theorem provides a method for determining the Hilbert polynomial of a subscheme of projective space by evaluating the Hilbert function at only two points, irrespective of the dimension of the ambient space. In \cite{ablett2024gotzmann} we established an analogue of Gotzmann's persistence theorem for smooth projective toric varieties. We generalise our results to the setting of Mori dream spaces, whose associated Cox rings are also finitely generated. We also give an alternative, stronger, persistence result for points in products of projective spaces.
\end{abstract}

\section{Introduction}
The Hilbert scheme $\hilb_{P(t)}(\mathbb{P}^n)$ parameterises subschemes of $\mathbb{P}^n$ with Hilbert polynomial $P(t)$, or equivalently saturated homogeneous ideals in $k[x_0,\dots,x_n]$ with Hilbert polynomial $P(t)$. From an algebraic perspective this scheme is an example of a multigraded Hilbert scheme, as in \cite{haiman2002multigraded}. In fact, for any smooth projective toric variety $X$ with Cox ring $S$, the Hilbert function of a homogeneous ideal of $S$ eventually agrees with a polynomial $P(\boldsymbol{t})$. We can therefore define the associated multigraded Hilbert scheme $\hilb_{P(\boldsymbol{t})}(X)$, parameterising saturated homogeneous ideals with Hilbert polynomial $P(\boldsymbol{t})$.

Gotzmann's regularity and persistence theorems \cite{gotzmann1978bedingung} are useful tools for finding explicit equations for $\hilb_P(\mathbb{P}^n)$. A generalisation of these results would enable us to verify that a set of points is a supportive set, as in \cite{haiman2002multigraded}*{Section 3}, which is a key step in the construction of $\hilb_{P(\boldsymbol{t})}(X)$. Maclagan and Smith \cite{maclagan2003uniform} give a generalisation of Gotzmann's regularity theorem for smooth projective toric varieties. In Ablett \cite{ablett2024gotzmann} we generalised Gotzmann's persistence theorem to this setting. Explicitly, for an ideal in the Cox ring of a Picard-rank-s smooth projective toric variety we find a set of $2^s$ points such that checking the Hilbert function of an ideal in these points verifies its Hilbert polynomial.

However, smooth projective toric varieties are not the only varieties with a finitely generated Cox ring. In fact, the property of having a finitely generated Cox ring characterises a larger class of varieties: Mori dream spaces. Note that given an ideal sheaf $\mathcal{J}$ on a Mori dream space $X$, we can associate a homogeneous ideal in the Cox ring $S$, generated by sections vanishing on the corresponding closed subscheme. Conversely, every ideal sheaf on $X$ arises as the sheaf associated to a homogeneous ideal of $S$, \cite{MR3307753}*{Proposition 4.2.1.11}. The Hilbert function of a homogeneous ideal $J \subseteq S$ therefore agrees with a polynomial sufficiently far into the nef cone, given by the Euler characteristic of the twisted structure sheaf of the corresponding subscheme (see Proposition \ref{polyex}). It is therefore sensible to ask whether our persistence-type results hold for Mori dream spaces. As in \cite{ablett2024gotzmann}, we say $P(t_1,\dots,t_s)$ is a Hilbert polynomial on the Cox ring $S$ of a Mori dream space $X$ if it is the Hilbert polynomial associated to some homogeneous ideal of $S$. We obtain the following theorem.

\begin{theorem}\label{theorem1}
    Let $X$ be a Mori dream space with Cox ring $S$ and Picard number $s$, and let $J \subseteq S$ be a homogeneous ideal. Suppose that $P(t_1,\dots,t_{s})$ is a Hilbert polynomial on $S$. Then we demonstrate the existence of $2^{s}$ points $(r_1,\dots,r_{s}) \in \mathbb{N}^{s}$ such that checking $H_J(r_1,\dots,r_{s})=P(r_1,\dots,r_{s})$ for all of these points guarantees that $P_J=P$. 
\end{theorem}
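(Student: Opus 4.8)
The plan is to reduce the statement to the persistence theorem for toric varieties established in \cite{ablett2024gotzmann}, exploiting the fact that the Cox ring of a Mori dream space is a quotient of a polynomial ring. Fixing a finite set of homogeneous generators of $S$ presents it as $S \cong R/I_X$, where $R = k[x_1,\dots,x_n]$ is graded by $\mathrm{Cl}(X)$ through the degrees of the chosen generators and $I_X$ is the homogeneous ideal of relations. By the Cox ring theory underlying Proposition 4.2.1.11 of \cite{MR3307753}, the ring $R$ is the Cox ring of a simplicial projective toric variety $Y$ with $\mathrm{Cl}(Y) \cong \mathrm{Cl}(X)$, hence of Picard number $s$, and the surjection $R \twoheadrightarrow S$ corresponds to a closed embedding $X \hookrightarrow Y$. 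It is this coincidence of Picard numbers that yields exactly $2^s$ points.

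Given the homogeneous ideal $J \subseteq S$, I would let $\tilde J \subseteq R$ be its full preimage under $R \twoheadrightarrow S$, a homogeneous ideal containing $I_X$. The key observation is the graded isomorphism $R/\tilde J \cong S/J$. Since the Hilbert function and Hilbert polynomial of Proposition \ref{polyex} are those of the quotient --- equivalently, the Euler characteristic of the twisted structure sheaf of the associated subscheme --- this isomorphism yields $H_{\tilde J} = H_J$ and $P_{\tilde J} = P_J$. Applying the same lifting to an ideal of $S$ realising $P$ shows moreover that $P$ is a Hilbert polynomial on $R$, as is needed to invoke the toric result.

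With this dictionary in hand, I would apply the persistence theorem of \cite{ablett2024gotzmann} to the Picard-rank-$s$ toric variety $Y$ and the polynomial $P$: it furnishes $2^s$ points $(r_1,\dots,r_s) \in \mathbb N^s$ with the property that $H_{\tilde J} = P$ at all of them forces $P_{\tilde J} = P$. Because the Hilbert functions and polynomials of $\tilde J$ and $J$ agree, verifying $H_J(r_1,\dots,r_s) = P(r_1,\dots,r_s)$ at these same $2^s$ points forces $P_J = P$, which is precisely the claim.

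The principal obstacle is that the toric variety $Y$ produced by the Cox presentation is in general only simplicial, whereas \cite{ablett2024gotzmann} is phrased for smooth projective toric varieties; I also expect to have to account for torsion in $\mathrm{Cl}(X)$ by passing to its free part, in which the points $(r_1,\dots,r_s)$ are chosen. The real work therefore lies in confirming that the argument of \cite{ablett2024gotzmann} --- which rests on the multigraded regularity bound of \cite{maclagan2003uniform} together with a Macaulay-type growth estimate in $R$ --- remains valid for the ambient simplicial toric variety. As these ingredients concern only the polynomial ring $R$ and the combinatorics of its Hilbert functions rather than the geometry of $Y$, I expect them to survive the weakening from smooth to simplicial; once this is verified, the reduction from $X$ to $Y$ and the transfer of the $2^s$ points are purely formal.
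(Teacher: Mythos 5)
Your proposal follows essentially the same route as the paper: present $S$ as a quotient of a polynomial ring $R$, realise $R$ as the Cox ring of a simplicial projective toric variety containing $X$ as a closed subvariety (the paper cites \cite{moridreamGIT}*{Proposition 2.11} for this), lift $J$ to its preimage $I+\pi^{-1}J \subseteq R$, transfer Hilbert functions and polynomials across the graded isomorphism (this is Proposition \ref{polyex}), and invoke the toric persistence theorem. The one step you leave as an expectation --- that the persistence theorem of \cite{ablett2024gotzmann} survives the weakening from smooth to simplicial --- is precisely what the paper establishes as Theorem \ref{quasismooth}, and your anticipated resolution is the correct one: the key ingredient, \cite{maclagan2004multigraded}*{Theorem 1.4}, is valid for simplicial projective toric varieties, so the argument of \cite{ablett2024gotzmann}*{Theorem 1.3} goes through unchanged.
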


Naively, for a subscheme of a $d$-dimensional Mori dream space, the number of non-zero coefficients of the polynomial $P(t_1,\dots,t_s)$ can be as high as $\binom{s+d}{d}$, meaning we would expect to check this many points. In contrast, we find a set of points whose size is not dependent on dimension, which is in keeping with Gotzmann's original persistence result.

In \cite{ablett2024gotzmann} we prove a persistence result for an ideal $I$ in the Cox ring of the product of projective spaces. Given a multigraded Hilbert polynomial $P(t_1,\dots,t_s)$ we show the existence of an $s$-dimensional unit hypercube with the property that verifying $H_I(b_1,\dots,b_s)=P(b_1,\dots,b_s)$ for the vertices of this hypercube ensures that $P_I=P$. The proof of this theorem is constructive, but is reliant on computing the degrees of generators of a multilex ideal in $\cox( \mathbb{P}^{n_1} \times \dots \times \mathbb{P}^{n_s})$ with a given Hilbert polynomial. Ideally we would like to forgo this step and make the computation of the hypercube more straightforward. In the case that the prospective Hilbert polynomial is constant we give an alternative result which is not reliant on computing multilex ideals.

\begin{theorem}\label{theorem2}
Let $I \subseteq S = \cox(\mathbb{P}^{n_1} \times \dots \times \mathbb{P}^{n_s})$ be an ideal, homogeneous with respect to the $\mathbb{Z}^s$-grading. Let $m \in \mathbb{N}$, and suppose $d_i \geq m$ for all $i$, and that $I$ is generated in degrees $\leq(d_1,\dots,d_s) \in \mathbb{Z}^{s}_{\geq 1}$. Let $\mathcal{B} = \{(b_1,\dots,b_s) \in \mathbb{N}^s \mid b_i \in \{d_i,d_i+1\}\}$. 

If \[H_I(b_1,\dots,b_s)=m\] for all $(b_1,\dots,b_s) \in \mathcal{B}$, then $H_I(u_1,\dots,u_s)=m$ for all $(u_1,\dots,u_s) \in (d_1,\dots,d_s) + \mathbb{N}^s$. It follows that $P_I(t_1,\dots,t_s)=m$.
\end{theorem}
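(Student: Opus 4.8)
The plan is to reduce this multigraded statement to a one-parameter persistence claim in a single variable group, and then to propagate the value $m$ across the whole orthant $(d_1,\dots,d_s)+\mathbb{N}^s$ by sweeping through the $2^s$ corners of $\mathcal{B}$ one direction at a time. The first ingredient I would record is the consequence of the generation hypothesis. Writing $e_i$ for the $i$-th standard basis vector of $\mathbb{Z}^s$ and $S_{e_i}$ for the span of the degree-$e_i$ variables, if $I$ is generated in degrees $\le(d_1,\dots,d_s)$ then for every multidegree $\boldsymbol{a}$ with $\boldsymbol{a}\ge(d_1,\dots,d_s)$ and $a_i>d_i$ one has
\[ I_{\boldsymbol{a}} = S_{e_i}\cdot I_{\boldsymbol{a}-e_i}. \]
Indeed, any generator contributing to $I_{\boldsymbol{a}}$ has $i$-th degree at most $d_i<a_i$, so each product representing an element of $I_{\boldsymbol{a}}$ carries a positive power of the $i$-th variables which can be factored out.

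Next I would slice. Fixing all coordinates except the $i$-th at values $a_j$ ($j\ne i$), the direct sum over the $i$-th degree $c$ of the pieces $S$ is a free module $T^{N}$ over the polynomial ring $T=k[x_{i,0},\dots,x_{i,n_i}]$, with $N=\prod_{j\ne i}\binom{n_j+a_j}{n_j}$, and correspondingly the quotient gives a graded $T$-module $M=T^{N}/U$ whose Hilbert function in degree $c$ is $H_I$ evaluated at the slice. The displayed relation says precisely that $U$ is generated in degrees $\le d_i$. Thus everything reduces to the following one-variable-group claim, which I regard as the technical heart: if $T$ is a polynomial ring, $M=T^{N}/U$ with $U$ generated in degrees $\le d$, and $\dim_k M_d=\dim_k M_{d+1}=m$ with $m\le d$, then $\dim_k M_c=m$ for all $c\ge d$.

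To prove this one-direction claim I would invoke the module-theoretic versions of Macaulay's growth bound and Gotzmann's persistence theorem for quotients of free modules. Because $m\le d$, the growth bound applied in degree $d$ forces $\dim_k M_{d+1}\le m$, and the constant function is exactly the extremal (maximal-growth) Hilbert function attaining the value $m$ in this degree range; the hypothesis $\dim_k M_{d+1}=m$ therefore realises the bound, and persistence propagates maximal growth to every subsequent degree, yielding $\dim_k M_c=m$ for all $c\ge d$. Alternatively one argues directly: for a generic $\ell\in T_1$ the exact sequence
\[ M_{c-1} \xrightarrow{\;\cdot\ell\;} M_c \to (M/\ell M)_c \to 0, \]
together with the facts that $\ell$ is a nonzerodivisor on $M$ in high degrees and that $M/\ell M$ is again generated in degrees $\le d$, gives $(M/\ell M)_c=0$ for $c\ge d+1$ and hence $\dim_k M_c=\dim_k M_{c-1}$ there. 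I expect this claim to be the main obstacle, precisely because it is here that the genericity of $\ell$ and the hypothesis $m\le d$ must be made to control the kernel of multiplication by $\ell$, equivalently the finite-length part of $M$.

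Finally I would sweep the orthant. Applying the claim in direction $1$ for each of the $2^{s-1}$ corners of $\mathcal{B}$ in the remaining coordinates extends $H_I=m$ to all points with first coordinate $\ge d_1$ and the other coordinates in $\{d_j,d_j+1\}$. Feeding these new equalities into the claim in direction $2$ (whose generation hypothesis is supplied by the displayed relation, since the first coordinate is already $\ge d_1$) extends $H_I=m$ to first two coordinates $\ge(d_1,d_2)$, and iterating through directions $3,\dots,s$ fills out the entire translated orthant $(d_1,\dots,d_s)+\mathbb{N}^s$. Since $H_I$ is then the constant $m$ on a full translate of $\mathbb{N}^s$, it agrees there with a polynomial, forcing $P_I=m$.
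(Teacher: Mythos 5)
Your proposal is correct and takes essentially the same route as the paper's proof: your slicing construction and generation relation are exactly the paper's Lemma \ref{applygash}, your one-direction claim is resolved by the same module-theoretic Macaulay/Gotzmann theorem of Gasharov (Theorem \ref{gash}) via the identity $m^{\langle d\rangle}=m$ for $m\le d$, and your direction-by-direction sweep of the orthant is the paper's induction (run in the opposite coordinate order). Your alternative generic-linear-form argument is indeed incomplete for the reason you flag (injectivity of $\cdot\ell$ in degree $d$ is not automatic), but since it is offered only as an alternative to the Gasharov route, the proof stands.
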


In Section \ref{mds}, we show that the Hilbert function of an ideal in the Cox ring of a Mori dream space eventually becomes polynomial and prove Theorem \ref{theorem1}, while in Section \ref{pointpers}, we relate the Hilbert function of a multigraded ideal to that of a module over a standard-graded polynomial ring, leading to Theorem \ref{theorem2}.

\section*{Acknowledgements}
Many thanks to Ali Craw for his suggestion of an extension of my results to Mori dream spaces, and for very helpful mathematical discussions on this topic. Further thanks to Joachim Jelisiejew for interesting discussion about the products of projective spaces case, and to Ritvik Ramkumar who also suggested a potential Mori dream space extension. Many thanks to Diane Maclagan for her much appreciated support throughout the project. The author was funded through the Warwick Mathematics Institute Centre for
Doctoral Training, with support from the University of Warwick and the UK Engineering and Physical Sciences Research Council (EPSRC grant EP/W523793/1).

\section{Persistence for Mori dream spaces}\label{mds}
\subsection{Preliminaries}
Mori dream spaces are a class of varieties known for being well-behaved with respect to birational geometry. We begin with some preliminaries on these spaces and their Cox rings.
\begin{definition}\label{coxring}
    Fix a field $k$ and let $X \subseteq \mathbb{P}^n_k$ be a projective variety with $\pic(X) \otimes_{\mathbb{Z}} \mathbb{Q} \cong N^1(X)$. Let $\tors(X) \subseteq \pic(X)$ denote the torsion subgroup of $\pic(X)$, and let $\eff(X)$ denote the pseudoeffective cone of $X$. Let $[E_1],\dots,[E_{s}]$ be classes of Cartier divisors forming a $\mathbb{Z}$-basis for $\pic(X)/\tors(X)$, and with $\eff(X) \subseteq \sum \mathbb{Q}_{\geq 0}[E_i]$. Then
    \[\cox(X) = \bigoplus_{(e_1,\dots,e_{s}) \in \mathbb{N}^{s}}H^0(X,\mathcal{O}_X(e_1E_1+ \dots + e_{s}E_{s})).\]
    Observe that $\cox(X)$ is graded by $\mathbb{Z}^s \cong \pic(X)/\tors(X)$.
\end{definition}
While this ring appears to depend on our choice of $E_1,\dots,E_{s}$, different choices of $E_1,\dots,E_{s}$ give isomorphic Cox rings \cite{moridreamGIT}*{Section 2}.
\begin{thrmdefinition}[\cite{moridreamGIT}*{Proposition 2.9}]
    Let $X \subseteq \mathbb{P}^n_k$ be a $\mathbb{Q}$-factorial projective normal variety with $\pic(X) \otimes_{\mathbb{Z}} \mathbb{Q} \cong N^1(X)$. Then $X$ is a Mori dream space if and only if $\cox(X)$ is finitely generated as a $k$-algebra.
\end{thrmdefinition}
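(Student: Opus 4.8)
The statement is an equivalence, so I would prove both implications, using in each direction the same bridge: geometric invariant theory applied to the affine cone $\widehat{X} = \Spec(\cox(X))$, equipped with the action of the Néron--Severi quasitorus $G = \Spec\bigl(k[\pic(X)/\tors(X)]\bigr)$ induced by the $\mathbb{Z}^s$-grading of $\cox(X)$. Under this action one realises $X$ as a GIT quotient $\widehat{X}/\!/_{\!\chi} G$ with the linearisation $\chi$ taken in the ample cone; sections of $\mathcal{O}_X(D)$ correspond to $\chi$-semi-invariant functions, so that understanding $X$ together with its small modifications amounts to understanding how this quotient varies with $\chi$.

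For the implication $\cox(X)$ finitely generated $\Rightarrow X$ a Mori dream space, I would import the variation-of-GIT machinery of Thaddeus and Dolgachev--Hu. Once $\cox(X)$ is finitely generated, $\widehat{X}$ is affine of finite type, and the space of linearisations (the rational effective cone) carries a finite wall-and-chamber decomposition: there are only finitely many distinct quotients $\widehat{X}/\!/_\chi G$, the full-dimensional chambers correspond to the $\mathbb{Q}$-factorial birational models $X_i$, crossing a wall induces a flip or flop, and the chamber of $\chi$ returns $X$. From this one reads off the Mori dream structure directly: the chamber decomposition is the Mori chamber decomposition of $\eff(X)$, each $\nef(X_i)$ is rational polyhedral and generated by the pullbacks of ample, hence semiample, classes, and $\mov(X)$ is the union of the $f_i^*\nef(X_i)$ over the finitely many small modifications.

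For the converse, I would argue chamber-by-chamber and then glue. Being a Mori dream space forces $\eff(X)$ to be rational polyhedral with finitely many Mori chambers, each of the form $f_i^*\nef(X_i)$ for a model $X_i$ whose nef cone is generated by finitely many semiample classes, together with the finitely many extremal rigid effective divisors from the divisorial contractions. For a single semiample divisor $D$ the section ring $\bigoplus_{n} H^0(X_i, nD)$ is finitely generated by Zariski's theorem; since finitely many semiample classes generate each $\nef(X_i)$, the multisection ring over each closed chamber is finitely generated. It then remains to assemble these finitely many chamber algebras, each graded by a finitely generated submonoid of $\pic(X)/\tors(X)$ by Gordan's lemma, into the single algebra $\cox(X)$ graded over $\eff(X) \cap \pic(X)/\tors(X)$.

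I expect the main obstacle to be exactly this assembly step: finite generation on each member of a finite cover of $\eff(X)$ by rational polyhedral subcones does not automatically yield finite generation of the total algebra, and one must exploit the rational polyhedrality of $\eff(X)$ together with a monoid-theoretic gluing lemma to control sections along the common walls where adjacent models are compared by a small modification. In the forward direction the analogous difficulty is less about proving and more about correctly importing the VGIT finiteness theorem: one has to verify that the rings of semi-invariants really are the expected section modules, which in turn requires checking that normality and $\mathbb{Q}$-factoriality of $X$ transfer to $\widehat{X}$ and to its quotients.
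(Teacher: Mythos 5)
The first thing to note is that the paper does not prove this statement at all: it is a Theorem-Definition imported verbatim from Hu--Keel \cite{moridreamGIT}*{Proposition 2.9}, so the only meaningful comparison is with the proof in that source. Your overall strategy --- realising $X$ as a quotient of $\Spec(\cox(X))$ by the N\'eron--Severi quasitorus, invoking variation-of-GIT finiteness for the direction ``$\cox(X)$ finitely generated $\Rightarrow$ Mori dream space'', and arguing chamber-by-chamber via semiample finite generation for the converse --- is exactly the strategy of Hu--Keel, so in outline you are reconstructing the cited proof rather than offering an alternative route.

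However, your converse direction has a genuine gap, and you have located the difficulty in the wrong place. The step you flag as the main obstacle --- assembling finitely many finitely generated chamber algebras into $\cox(X)$ --- is in fact immediate: each chamber is a rational polyhedral cone, so by Gordan's lemma its lattice points form a finitely generated monoid; any homogeneous element of $\cox(X)$ has degree lying in some chamber and therefore lies in the corresponding finitely generated subalgebra, so the union of the finitely many finite generating sets already generates $\cox(X)$. No wall analysis or gluing lemma is required. The real crux, which your sketch assumes away in the sentence ``Being a Mori dream space forces $\eff(X)$ to be rational polyhedral with finitely many Mori chambers\dots'', is that the definition of a Mori dream space only provides a chamber decomposition of the movable cone, $\mov(X) = \bigcup_i f_i^*\nef(X_i)$; it says nothing directly about $\eff(X)$. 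Extending finite generation from the movable cone to the whole effective cone --- producing the finitely many rigid divisors $E_j$, showing that every effective class decomposes as the pullback of a nef class on some model plus an effective combination of the $E_j$, and showing that multiplication by the canonical sections of the $E_j$ identifies the relevant graded pieces --- is precisely the content of \cite{moridreamGIT}*{Proposition 1.11}, and in this direction it must be derived from the Mori dream space axioms alone: one cannot appeal to the GIT chamber structure, which presupposes finite generation of $\cox(X)$, without circularity. Until that step is supplied, the chamber algebras you propose to glue are only defined over $\mov(X)$ rather than over all of $\eff(X)$, and the argument does not close.
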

The key motivation for studying these spaces is that there is an algorithm for the minimal model program which terminates. Note that if $X$ is a Mori dream space then $\nef(X)$ is a rational polyhedral cone. 

Throughout the rest of Section \ref{mds}, let $X \subseteq \mathbb{P}^n_k$ be a Mori dream space with Picard rank $s$, and fix $[E_1],\dots,[E_{s}]$ to be classes of Cartier divisors forming a $\mathbb{Z}$-basis for $\pic(X)/\tors(X)$ with $\eff(X) \subseteq \sum \mathbb{Q}_{\geq 0}[E_i]$. Let $\mathcal{K}$ denote the semigroup of nef line bundles on $X$. Set $S=\cox(X)$ for brevity. Since $S$ is finitely generated there exists a $\mathbb{Z}^{s}$-graded polynomial ring $R=k[x_1,\dots,x_r]$ such that $S = R/I$ for an ideal $I \subseteq R$, homogeneous with respect to the $\mathbb{Z}^{s}$-grading. By \cite{moridreamGIT}*{Proposition 2.11} there is an embedding of $X$ into a simplicial projective toric variety $W$ whose Cox ring is given by $R$, and with $\pic(X) \otimes_{\mathbb{Z}} \mathbb{Q} \cong \pic(W) \otimes_{\mathbb{Z}} \mathbb{Q}$. This embedding will allow us to extend results on simplicial projective toric varieties to Mori dream spaces.
\begin{definition}
The Hilbert function of $S$ is
    \begin{align*}
        H_S \colon \mathbb{N}^{s} & \rightarrow \mathbb{N} \\
        (e_1,\dots,e_{s}) & \mapsto \dimn_k(S_{(e_1,\dots,e_s)}).
    \end{align*}
Observe that $H_S(e_1,\dots,e_s) = h^0(X,\mathcal{O}_X(e_1E_1+\dots+e_{s} E_{s}))$ by definition. For a homogeneous ideal $J \subseteq S$ define the Hilbert function of $J$ to be 
    \begin{align*}
        H_J \colon \mathbb{N}^{s} & \rightarrow \mathbb{N} \\
        (e_1,\dots,e_{s}) & \mapsto \dimn_k((S/J)_{(e_1,\dots,e_s)}).
    \end{align*}
\end{definition}
\subsection{Existence of the Hilbert polynomial}
We claim that the function $H_J$ agrees with a numerical polynomial for Cartier degrees sufficiently far into the nef cone. Note that by numerical polynomial we mean a polynomial with rational coefficients taking non-negative integer values when $t_1,\dots,t_s$ are integers.
\begin{proposition}\label{polyex}
  Let $X$ be a Mori dream space with semigroup of nef line bundles $\mathcal{K}$ and Cox ring $S$. Let $\iota \colon Z \hookrightarrow X$ be the closed subscheme of $X$ defined by the homogeneous ideal $J \subseteq S$. Let $W$ be a simplicial projective toric variety as in \cite{moridreamGIT}*{Proposition 2.11} with Cox ring $R$ such that $\pi \colon R \rightarrow S$ is a surjective map of rings and $\pic(X) \otimes_{\mathbb{Z}} \mathbb{Q} \cong \pic(W) \otimes_{\mathbb{Z}} \mathbb{Q}$. Then there exists a Cartier divisor $D'$ and a numerical polynomial $P_J(t_1,\dots,t_{s}) \in \mathbb{Q}[t_1,\dots,t_{s}]$ such that $H_J(e_1,\dots,e_{s})=P_J(e_1,\dots,e_{s})$ for all $E=e_1E_1+\dots+e_{s}E_{s} \in D' + \mathcal{K}$. Further, this polynomial is the Hilbert polynomial of the ideal $I+\pi^{-1}J \subseteq R$.
\end{proposition}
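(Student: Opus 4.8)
The plan is to separate the statement into an algebraic identification of Hilbert functions and a geometric computation of the common function via Euler characteristics.

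First I would record the algebraic reduction. Since $\pi\colon R\to S$ is a surjective homomorphism of $\mathbb{Z}^s$-graded rings with kernel $I$, the preimage $\pi^{-1}J$ is a homogeneous ideal of $R$ containing $I$ (so that $I+\pi^{-1}J=\pi^{-1}J$), and the correspondence theorem gives a graded isomorphism $R/(I+\pi^{-1}J)\cong S/J$. Because the isomorphism $\pic(X)\otimes_{\mathbb{Z}}\mathbb{Q}\cong\pic(W)\otimes_{\mathbb{Z}}\mathbb{Q}$ makes the two $\mathbb{Z}^s$-gradings compatible under $\pi$, this isomorphism is degreewise, so $H_J(\mathbf{e})=\dimn_k(S/J)_{\mathbf{e}}=\dimn_k\bigl(R/(I+\pi^{-1}J)\bigr)_{\mathbf{e}}$ for every $\mathbf{e}\in\mathbb{Z}^s$. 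Thus $H_J$ is literally the Hilbert function of $I+\pi^{-1}J$; once I show that $H_J$ agrees with a polynomial far into the nef cone, that polynomial is by definition the Hilbert polynomial of $I+\pi^{-1}J$, which disposes of the final sentence.

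Next I would compute $H_J(\mathbf{e})$ geometrically for $E=e_1E_1+\dots+e_sE_s$ deep inside $\mathcal{K}$. By definition of the Cox ring $S_{\mathbf{e}}=H^0(X,\mathcal{O}_X(E))$, and the image of $J_{\mathbf{e}}$ is the space of sections vanishing on $Z$, namely $H^0(X,\mathcal{I}_Z(E))$, at least once $E$ is large enough that $J$ agrees with its saturation in degree $\mathbf{e}$ (via the ideal-sheaf dictionary of \cite{MR3307753}*{Proposition 4.2.1.11}). Twisting the structure sequence $0\to\mathcal{I}_Z\to\mathcal{O}_X\to\iota_*\mathcal{O}_Z\to 0$ by $\mathcal{O}_X(E)$ and taking cohomology yields
\[
0\to H^0(\mathcal{I}_Z(E))\to H^0(\mathcal{O}_X(E))\to H^0(Z,\mathcal{O}_Z(E|_Z))\to H^1(\mathcal{I}_Z(E))\to\cdots.
\]
Applying Fujita's vanishing theorem to the coherent sheaves $\mathcal{I}_Z$ and $\iota_*\mathcal{O}_Z$ relative to a fixed ample class produces a single Cartier divisor $D'$ (a suitable multiple of that class) such that $H^i(\mathcal{I}_Z(E))=0$ and $H^i(Z,\mathcal{O}_Z(E|_Z))=0$ for all $i\geq 1$ and all $E\in D'+\mathcal{K}$; enlarging $D'$ if necessary to clear the saturation bound, I obtain
\[
H_J(\mathbf{e})=\dimn_k H^0(\mathcal{O}_X(E))-\dimn_k H^0(\mathcal{I}_Z(E))=h^0(Z,\mathcal{O}_Z(E|_Z))=\chi(Z,\mathcal{O}_Z(E|_Z))
\]
for every such $E$.

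Finally, Snapper's theorem shows that $\chi(Z,\mathcal{O}_Z(E|_Z))=\chi\bigl(X,\iota_*\mathcal{O}_Z\otimes\mathcal{O}_X(t_1E_1+\dots+t_sE_s)\bigr)$ is a numerical polynomial $P_J(t_1,\dots,t_s)$, which is the polynomial asserted in the proposition. The main obstacle is this middle step: producing a \emph{uniform} $D'$ that simultaneously controls the saturation of $J$, so that $J_{\mathbf{e}}=H^0(\mathcal{I}_Z(E))$, and forces all higher cohomology to vanish for every $E$ in the shifted nef cone, rather than merely along a single ray. This uniformity is exactly what Fujita's theorem provides, using that $X$ is projective, and it is the point where the geometry of the nef cone interacts most delicately with the algebra of the ideal.
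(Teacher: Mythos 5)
Your algebraic reduction (the graded isomorphism $R/(I+\pi^{-1}J)\cong S/J$, so $H_J=H_{I+\pi^{-1}J}$ and the last sentence of the proposition follows once polynomiality is known) and your endgame (Fujita vanishing for $\iota_*\mathcal{O}_Z$, surjectivity of restriction from $H^1(\mathcal{I}_Z(E))=0$, then Snapper's theorem) are sound and parallel the paper. The genuine gap is exactly the step you flag as the main obstacle: the identification $J_{\mathbf{e}}=H^0(X,\mathcal{I}_Z(E))$ for \emph{all} $E\in D'+\mathcal{K}$. You assert this uniformity ``is exactly what Fujita's theorem provides,'' but Fujita's theorem, applied as you apply it (to the coherent sheaves $\mathcal{I}_Z$ and $\iota_*\mathcal{O}_Z$), controls only the higher sheaf cohomology $H^i(\,\cdot\,(E))$ for $i\geq 1$; it says nothing about the discrepancy between the graded piece $J_{\mathbf{e}}$ and the full section space $H^0(\mathcal{I}_Z(E))$. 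That discrepancy is governed by graded local cohomology with respect to the irrelevant ideal $B\subseteq S$, not by sheaf cohomology of any fixed coherent sheaf on $X$. Moreover, the classical $\mathbb{P}^n$ argument (that $J^{\mathrm{sat}}/J$ is finitely generated and killed by a power of the maximal ideal, hence finite dimensional, hence zero in large degrees) does not transfer: in the multigraded setting $B$ is not maximal, and a finitely generated module killed by a power of $B$ can have infinitely many nonzero graded pieces --- e.g.\ $S/(x_{1,0},x_{1,1})\cong k[x_{2,0},x_{2,1}]$ over $S=\cox(\mathbb{P}^1\times\mathbb{P}^1)$ is killed by $B$ itself. (In that example the nonzero degrees still avoid a translate of the nef cone, but that is the statement to be proven, not a formality.) So ``enlarging $D'$ to clear the saturation bound'' presupposes the existence of a single bound, of the shape of a cone translate, valid for all $\mathbf{e}$ simultaneously --- which is precisely what is at issue.

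This is also where the paper uses the hypothesis your proof never genuinely touches: the toric embedding. The paper applies \cite{DC1}*{Theorem 3.7} to the ideals $I+\pi^{-1}J$ and $I+\pi^{-1}J'$ in the Cox ring $R$ of the simplicial projective toric variety $W$, where $J'=\bigoplus_{\mathbf{e}}H^0(X,\tilde{J}(E))$, to conclude that both Hilbert functions agree with $h^0(W,\nu_*\mathcal{O}_Z(E))=h^0(X,\iota_*\mathcal{O}_Z(E))$ on some translate $F+\mathcal{K}$; in your write-up, $W$ and $R$ enter only through the trivial isomorphism $R/(I+\pi^{-1}J)\cong S/J$, a sign that the load-bearing input has been bypassed. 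If you wanted to avoid the toric citation and stay within your framework, the repair would be to apply Fujita to a third sheaf: choose homogeneous generators $f_1,\dots,f_c$ of $J$ of degrees $\mathbf{a}_1,\dots,\mathbf{a}_c$, sheafify the surjection $\bigoplus_i S(-\mathbf{a}_i)\twoheadrightarrow J$ to get $\bigoplus_i\mathcal{O}_X(-A_i)\twoheadrightarrow\mathcal{I}_Z$ with kernel $\mathcal{G}$, and use $H^1(\mathcal{G}(E))=0$ for $E\in D'+\mathcal{K}$ to conclude $J_{\mathbf{e}}=\sum_i f_i\,S_{\mathbf{e}-\mathbf{a}_i}=H^0(\mathcal{I}_Z(E))$. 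But even this needs foundational facts you do not supply --- exactness of sheafification of graded $S$-modules on a Mori dream space, $\widetilde{S(-\mathbf{a}_i)}\cong\mathcal{O}_X(-A_i)$, and $\tilde{J}=\mathcal{I}_Z$ --- so, as written, the proposal has a real hole at its central step.
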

\begin{proof}
By \cite{MR0726440}*{Theorem (1)}, given an ample Cartier divisor $D$ there exists some $d \in \mathbb{N}$ such that \[H^i(X,\iota_{*}\mathcal{O}_Z(dD+C))=0\] for any nef Cartier divisor $C$ and $i \geq 1$. We conclude that for all $E \in dD + \mathcal{K}$ we have $H^i(X,\iota_{*}\mathcal{O}_Z(E))=0$ for all $i \geq 1$. We then observe that for such $E$, we have \[\chi (\iota_{*}\mathcal{O}_Z(E))=h^0(X,\iota_{*}\mathcal{O}_Z(E)).\] By \cite{snapperpoly}*{Theorem 9.1}, $\chi (\iota_{*}\mathcal{O}_Z(t_1E_1+\dots+t_sE_s))$ is a numerical polynomial in $t_1,\dots,t_{s}$. Further, for \[J' = \bigoplus_{(e_1,\dots,e_s) \in \mathbb{N}^s} H^0(X,\tilde{J}(e_1E_1+\dots+e_sE_s)) \subseteq S\] we have $\tilde{J'}=\tilde{J}$, and by definition $H_{J'}(e_1,\dots,e_s) = h^0(X,\iota_{*} \mathcal{O}_Z(e_1E_1+\dots+e_{s}E_{s}))$ for all $\boldsymbol{e}=(e_1,\dots,e_s) \in \mathbb{N}^s$. Note also by definition that \[H_J(e_1,\dots,e_s) = H_{I+\pi^{-1}J}(e_1,\dots,e_s), \quad H_{J'}(e_1,\dots,e_s)=H_{I+\pi^{-1}J'}(e_1,\dots,e_s)\] where $H_{I+\pi^{-1}J}$ denotes the Hilbert function of $I + \pi^{-1}J \subseteq R$. From a geometric perspective, let $\mu \colon X \hookrightarrow W$ be the embedding of $X$ into $W$ induced by $\pi \colon R \rightarrow S$, and denote $\nu = \mu \circ \iota \colon Z \hookrightarrow W$. We have $h^0(X,\iota_{*} \mathcal{O}_Z(e_1E_1+\dots+e_{s}E_{s}))=h^0(W,\nu_{*}\mathcal{O}_Z(e_1E_1+\dots+e_sE_s)$. By applying \cite{DC1}*{Theorem 3.7} to the ideals $I+\pi^{-1}J,I+\pi^{-1}J' \subseteq R$ we conclude that there exists $F \in \mathcal{K}$ such that \[H_{I+\pi^{-1}J}(e_1,\dots,e_s) = H_{I+\pi^{-1}J'}(e_1,\dots,e_s) = h^0(W,\nu_{*} \mathcal{O}_Z(e_1E_1+\dots+e_{s}E_{s}))\] for all $e_1E_1+\dots+e_sE_s \in F + \mathcal{K}$. Consequently, \[H_J(e_1,\dots,e_s) = H_{J'}(e_1,\dots,e_s) = h^0(X,\iota_{*} \mathcal{O}_Z(e_1E_1+\dots+e_{s}E_{s}))\] for all $e_1E_1+\dots+e_sE_s \in F + \mathcal{K}$. 

Now taking $D' \in (F+\mathcal{K}) \cap (dD + \mathcal{K})$, we have that $H_J(e_1,\dots,e_s)=\chi (\iota_{*}\mathcal{O}_Z(E))$ for $E=e_1E_1+\dots+e_sE_s \in D' + \mathcal{K}$. We therefore set \[P_J(t_1,\dots,t_s) = \chi (\iota_{*}\mathcal{O}_Z(t_1E_1+\dots+t_sE_s))\] and the result follows. Since we had $H_J(e_1,\dots,e_s)=H_{I+\pi^{-1}J}(e_1,\dots,e_s)$ for all $e_1E_1+\dots+e_sE_s \in F + \mathcal{K}$ we conclude that $P_J(t_1,\dots,t_s)=P_{I+\pi^{-1}J}(t_1,\dots,t_s)$. This means that any Hilbert polynomial on $S$ is also a Hilbert polynomial on $R$.
\end{proof}
\begin{remark}
Proposition \ref{polyex} can also be proven algebraically using \cite{MR1357776}*{Theorem 1}.
\end{remark}
\subsection{Persistence}
The goal of this section is to extend the persistence theorem of \cite{ablett2024gotzmann} to the setting of Mori dream spaces. In \cite{ablett2024gotzmann}*{Theorem 1.3} we obtained a persistence theorem for smooth projective toric varieties. In fact, this theorem holds more generally for simplicial projective toric varieties. As in \cite{ablett2024gotzmann}, we apply \cite{maclagan2004multigraded}*{Theorem 1.4} to obtain a persistence theorem for simplicial projective toric varieties.
\begin{theorem}\label{quasismooth}
    Let $W$ be a simplicial projective toric variety with $\pic(W)/\tors(W) \cong \mathbb{Z}^{s}$ and Cox ring $R$. Let $J \subseteq R$ be an ideal, homogeneous with respect to the $\mathbb{Z}^{s}$-grading. Let $P(t_1,\dots,t_s)$ be a Hilbert polynomial on $R$. Then there exists at most $2^{s}$ points $(r_1,\dots,r_{s}) \in \mathbb{N}^{{s}}$ such that checking $H_J(r_1,\dots,r_{s})=P(r_1,\dots,r_{s})$ for all of these points guarantees that $P_J=P$. 
\end{theorem}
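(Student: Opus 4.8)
The plan is to recognise this statement as the simplicial analogue of \cite{ablett2024gotzmann}*{Theorem 1.3}, which is proved there for \emph{smooth} projective toric varieties, and to verify that the argument survives the weakening of the hypothesis to the simplicial case. The only place where smoothness enters that proof is through the multigraded regularity bound of Maclagan and Smith, \cite{maclagan2004multigraded}*{Theorem 1.4}, so the heart of the matter is to confirm that this input is available for a simplicial projective toric variety $W$ with $\pic(W)/\tors(W) \cong \mathbb{Z}^s$. First I would observe that the Cox ring $R$ of any simplicial projective toric variety is still a polynomial ring $k[x_1,\dots,x_r]$, graded by a finitely generated abelian group of rank $s$, and that the irrelevant ideal and the combinatorics of $\nef(W)$ are defined exactly as in the smooth case. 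Maclagan--Smith's regularity theory is formulated via vanishing of multigraded local cohomology supported at the irrelevant ideal, and the resulting bounds depend only on this toric data rather than on smoothness; the torsion of $\pic(W)$ is absorbed by passing to the free quotient $\mathbb{Z}^s$ along the nef directions singled out by $[E_1],\dots,[E_s]$.

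With \cite{maclagan2004multigraded}*{Theorem 1.4} in hand for $W$, the remainder of the proof is a transcription of the smooth case. Given the prospective Hilbert polynomial $P$, the regularity bound produces a multidegree $\boldsymbol{a} = (a_1,\dots,a_s) \in \mathbb{N}^s$, depending only on $P$, past which any homogeneous ideal of $R$ with Hilbert polynomial $P$ has Hilbert function equal to $P$ and past which the relevant global-generation maps are surjective. The $2^s$ points of the statement are then taken to be the vertices $\boldsymbol{a} + \boldsymbol{\epsilon}$ of the unit hypercube, $\boldsymbol{\epsilon} \in \{0,1\}^s$. The persistence step is to show that the single-direction Macaulay-type minimal-growth estimate supplied by \cite{maclagan2004multigraded}*{Theorem 1.4} lets one propagate the equality $H_J = P$: checking $H_J(\boldsymbol{a} + \boldsymbol{\epsilon}) = P(\boldsymbol{a} + \boldsymbol{\epsilon})$ at all $2^s$ vertices forces minimal growth in each of the $s$ coordinate directions, and an induction on $\boldsymbol{u} \in \boldsymbol{a} + \mathbb{N}^s$ --- increasing one coordinate at a time and invoking one-dimensional Gotzmann persistence in that direction --- extends the equality to the whole shifted orthant. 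Since $H_J$ and $P$ agree on $\boldsymbol{a} + \mathbb{N}^s$ and $P$ is a Hilbert polynomial on $R$, Proposition \ref{polyex} gives $P_J = P$.

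I expect the main obstacle to be this first step: verifying carefully that \cite{maclagan2004multigraded}*{Theorem 1.4}, and in particular the global-generation and minimal-growth consequences of multigraded regularity on which the persistence argument relies, genuinely hold for simplicial rather than smooth projective toric varieties, together with the bookkeeping demanded by the torsion of $\pic(W)$. Once this is secured, the combinatorial propagation from the $2^s$ hypercube vertices to all of $\boldsymbol{a} + \mathbb{N}^s$ is routine and identical to \cite{ablett2024gotzmann}. A secondary point to check is that the corner $\boldsymbol{a}$ can always be realised in $\mathbb{N}^s$ and that some of the $2^s$ vertices may coincide or be redundant when $P$ degenerates, which accounts for the phrase ``at most $2^s$'' in the statement.
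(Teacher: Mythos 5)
Your opening diagnosis matches the paper: the proof is a rerun of \cite{ablett2024gotzmann}*{Theorem 1.3}, and the one point needing care is that \cite{maclagan2004multigraded}*{Theorem 1.4} remains valid for simplicial projective toric varieties (the paper cites the discussion at the start of \cite{maclagan2004multigraded}*{Section 6} for exactly this). However, your account of the persistence mechanism itself has a genuine gap. You treat \cite{maclagan2004multigraded}*{Theorem 1.4} as a ``single-direction Macaulay-type minimal-growth estimate'' and propose to propagate $H_J=P$ over $\boldsymbol{a}+\mathbb{N}^s$ by ``one-dimensional Gotzmann persistence'' in each coordinate direction of the $\mathbb{Z}^s$-grading on $R$. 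No Macaulay or Gotzmann growth bounds are known for the Cox ring of an arbitrary simplicial projective toric variety --- producing a persistence statement in that generality is precisely what the theorem is for, so this step is circular. Maclagan--Smith's theorem is a multigraded regularity statement: vanishing of higher cohomology and surjectivity of the multiplication maps on global sections for regular sheaves. It contains no growth bound on Hilbert functions.

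The paper uses that surjectivity for a different purpose: choosing nef classes $\boldsymbol{c_1},\dots,\boldsymbol{c_s}$ spanning $\mathbb{Q}^s$ with $\boldsymbol{c_1}\in\reg(W)$, setting $n_i=h^0(W,\mathcal{O}_W(\boldsymbol{c_i}))-1$ and $T=\cox(\mathbb{P}^{n_1}\times\dots\times\mathbb{P}^{n_s})$, it invokes \cite{ablett2024gotzmann}*{Lemma 4.9} (this is where Theorem 1.4 enters) to build an ideal $K\subseteq T$ with $(T/K)_{\boldsymbol{b}}\cong (R/J)_{f(\boldsymbol{b})}$ for $f(\boldsymbol{b})=b_1\boldsymbol{c_1}+\dots+b_s\boldsymbol{c_s}$ and $b_1\geq 1$. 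All Macaulay/Gotzmann growth input then lives in $T$, where \cite{ablett2024gotzmann}*{Theorem 1.1} (resting on Crona's bigraded bounds and Gasharov's module-theoretic persistence) supplies the $2^s$ points; checking $H_J$ at their $f$-images pins down $P_K=f^{\#}P$, and since full-dimensionality of $\nef(W)$ makes $f$ surjective and hence $f^{\#}$ injective, this forces $P_J=P$. Two secondary inaccuracies in your write-up follow from missing this reduction: the $2^s$ points for $J$ are the vertices of a parallelepiped spanned by $\boldsymbol{c_1},\dots,\boldsymbol{c_s}$ (the $f$-image of a unit hypercube in the grading of $T$), not a unit hypercube in the degree space of $R$; and the corner of that hypercube is determined by the multilex-ideal computation in $T$ behind \cite{ablett2024gotzmann}*{Theorem 1.1}, not by a regularity bound on $R$ depending only on $P$.
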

\begin{proof}
The proof is identical to that of \cite{ablett2024gotzmann}*{Theorem 1.3}. Let $\reg(W)$ denote the multigraded regularity of $R$, as defined in \cite{maclagan2004multigraded}*{Definition 1.1}. Recall that this is a set contained inside $\pic(W)$. Let $\mathcal{K}$ denote the semigroup of nef line bundles on $W$. Since $W$ is projective, $\nef(W)$ is full dimensional, and so we can fix a basis $\boldsymbol{c_1},\dots,\boldsymbol{c_s} \subseteq \mathcal{K}$ for the vector space $\mathbb{Q}^s$, with $\boldsymbol{c_1} \in \reg(W)$. We further fix $n_i = h^0(W,\mathcal{O}_W(\boldsymbol{c_i}))-1$, and set $T = \cox(\mathbb{P}^{n_1} \times \dots \times \mathbb{P}^{n_s})$. As in \cite{ablett2024gotzmann}*{Definition 4.6}, define 
\begin{align*}
f\colon \mathbb{Q}^{s} &\rightarrow \mathbb{Q}^s \\
(b_1,\dots,b_s) &\mapsto b_1\boldsymbol{c}_1+\dots+b_s\boldsymbol{c}_s.
\end{align*}
Let $f^{\#}$ be the induced map on $\mathbb{Q}[t_1,\dots,t_s] \rightarrow \mathbb{Q}[u_1,\dots,u_s]$. Note that \cite{maclagan2004multigraded}*{Theorem 1.4} holds for a simplicial toric variety as outlined at the beginning of \cite{maclagan2004multigraded}*{Section 6}, meaning \cite{ablett2024gotzmann}*{Lemma 4.9} applies. We conclude that there exists a homogeneous ideal $K \subseteq T$ such that $(T/K)_{\boldsymbol{b}} \cong (R/J)_{f(\boldsymbol{b})}$ for all $\boldsymbol{b}=(b_1,\dots,b_s) \in \mathbb{N}^s$ with $b_1 \geq 1$. It then follows that for a Hilbert polynomial $P(t_1,\dots,t_s)$ on $R$, $f^{\#}P(u_1,\dots,u_s)$ is a Hilbert polynomial on $T$. Since $\nef(W)$ is full dimensional, $f$ is surjective, and consequently $f^{\#}$ is injective. Applying \cite{ablett2024gotzmann}*{Theorem 1.1} and the injectivity of $f^{\#}$ now gives the result.
\end{proof}

We may now extend \cite{ablett2024gotzmann}*{Theorem 1.3} to any Mori dream space.
\begin{proof}[Proof of Theorem \ref{theorem1}]
    Recall that by \cite{moridreamGIT}*{Proposition 2.11}, there is a closed embedding of $X$ into a simpicial projective toric variety $W$ with Cox ring $R$, such that $S=R/I$ for some homogeneous ideal $I \subseteq R$ and $\pic(X) \otimes_{\mathbb{Z}} \mathbb{Q} \cong \pic(W) \otimes_{\mathbb{Z}} \mathbb{Q}$. The result then follows by Proposition \ref{polyex} and Theorem \ref{quasismooth}.
\end{proof}

\begin{exmp}
    Let $X=\gr(2,4) \times \gr(2,4) \subseteq W=\mathbb{P}^5 \times \mathbb{P}^5$. Then \[\cox(X) = k[x_0,\dots,x_5,y_0,\dots,y_5] / (x_0x_5-x_1x_4+x_2x_3,y_0y_5-y_1y_4+y_2y_3),\] with the usual $\mathbb{Z}^2$-grading on $\cox(W)=k[x_0,\dots,x_5,y_0,\dots,y_5]$ given by $\deg(x_i)=(1,0)$, $\deg(y_j)=(0,1)$. Note that $X$ is a Mori dream space by \cite{LW1}*{Lemma 3.3}. Consider the ideal $I = (x_0,x_2,x_3,y_0,y_2,y_3,x_1^2,y_1^2) \subseteq \cox(X)$. Applying Theorem \cite{ablett2024gotzmann}*{Theorem 1.1} to $I' \subseteq \cox(W)$ would allow us to verify whether another ideal in $\cox(X)$ has matching Hilbert polynomial to $I$.
    
    Explicitly, following the language of \cite{ablett2024gotzmann}, observe that the corresponding ideal $I^{\prime} = (x_0,x_2,x_3,y_0,y_2,y_3,x_1^2,x_1x_4,y_1^2,y_1y_4) \subseteq \cox(W)$ is bilex as in \cite{AAKCEDN}*{Definition 4.4} under an appropriate monomial ordering, and that $P_{I^{\prime}}(t_1,t_2) = t_1t_2+2t_1+2t_2+4$. We can find the minimum possible point $(d_1,d_2) \in \mathbb{N}^2$ satisfying the criterion in the proof of \cite{ablett2024gotzmann}*{Theorem 1.1} for $I'$. To do this, note that $I'$ is generated in degrees $\leq(2,2)$, and that 
    \[P_{I'}(t_1,t_2) = (t_1+2)\binom{t_2-2+1}{1} + (3t_1+4)\binom{t_2-2+1}{1}.\] Since the Gotzmann numbers of $t_1+2$ and $3t_1+4$ are $2$ and $7$ respectively, we can fix $d_1=7$. For this value of $d_1$, maximal growth of $P_I(d_1,t_2) = 9t_2+18$ as in \cite{crona2006standard}*{Theorem 4.10} occurs in degree $54$. Thus, the minimum possible point $(d_1,d_2)$ satisfying our criterion for $I'$ is $(7,54)$. This means that by Theorem \ref{theorem1}, for an appropriate ideal $J \subseteq R$ we can verify whether $P_J(t_1,\dots,t_s)=P_I(t_1,\dots,t_s)$ by checking the Hilbert function of $J$ at the points $(7,54), (8,54), (7,55), (8,55)$.
\end{exmp}

\section{Persistence for points in products of projective spaces}\label{pointpers}
In this section we obtain a stronger persistence result for products of projective spaces, in the case the subscheme is zero-dimensional. This is in contrast to \cite{ablett2024gotzmann}*{Theorem 1.1}, which requires knowledge of the generators of certain multilex ideals. We will require the following lemma and definition.
\begin{lemma}[\cite{bruns1998cohen}*{Lemma 4.2.6}]
Fix $d \in \mathbb{Z}_{>0}$ and $\alpha \in \mathbb{N}$. Then $\alpha$ can be written uniquely in the form
\begin{equation*}
    \alpha=\binom{\kappa(d)}{d}+\binom{\kappa(d-1)}{d-1}+\dots+\binom{\kappa(1)}{1},
\end{equation*}
where $\kappa(d)>\kappa(d-1)>\dots>\kappa(1)\geq0$.
\end{lemma}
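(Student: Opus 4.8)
The plan is to prove existence and uniqueness simultaneously by induction on $d$, using a greedy choice for the top term $\kappa(d)$. Throughout I adopt the standard convention $\binom{a}{b}=0$ whenever $0\le a<b$; this is precisely what lets the lower terms vanish (recorded by allowing $\kappa(j)<j$) and in particular handles $\alpha=0$, where one takes $\kappa(j)=j-1$ for every $j$. The base case $d=1$ is immediate: the representation reads $\alpha=\binom{\kappa(1)}{1}=\kappa(1)$, so $\kappa(1)=\alpha$ is forced, giving both existence and uniqueness at once.

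For the inductive step I would treat existence first. Assuming the statement for $d-1$, define $\kappa(d)$ to be the largest integer with $\binom{\kappa(d)}{d}\le\alpha$, which is well defined since $\binom{d-1}{d}=0\le\alpha$ while $\binom{n}{d}\to\infty$. Put $\alpha'=\alpha-\binom{\kappa(d)}{d}\ge0$. Maximality of $\kappa(d)$ together with Pascal's rule in the form $\binom{\kappa(d)+1}{d}-\binom{\kappa(d)}{d}=\binom{\kappa(d)}{d-1}$ gives $\alpha'<\binom{\kappa(d)}{d-1}$. Applying the inductive hypothesis to $\alpha'$ at level $d-1$ produces $\kappa(d-1)>\dots>\kappa(1)\ge0$ with $\alpha'=\sum_{j=1}^{d-1}\binom{\kappa(j)}{j}$. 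Since the top term of a sum of non-negative terms is at most the sum, $\binom{\kappa(d-1)}{d-1}\le\alpha'<\binom{\kappa(d)}{d-1}$, and strict monotonicity of $\binom{\cdot}{d-1}$ on integers $\ge d-2$ (note $\kappa(d)\ge d-1$) forces $\kappa(d-1)<\kappa(d)$, so the assembled tuple is admissible.

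The crux of uniqueness is the maximality estimate that for any admissible tuple $\kappa(d)>\kappa(d-1)>\dots>\kappa(1)\ge0$ one has
\[
\sum_{j=1}^{d}\binom{\kappa(j)}{j}\le\binom{\kappa(d)+1}{d}-1<\binom{\kappa(d)+1}{d}.
\]
Granting this, every admissible representation of $\alpha$ satisfies $\binom{\kappa(d)}{d}\le\alpha<\binom{\kappa(d)+1}{d}$, which pins $\kappa(d)$ down as exactly the greedy value chosen above; subtracting $\binom{\kappa(d)}{d}$ and invoking uniqueness at level $d-1$ then determines $\kappa(d-1),\dots,\kappa(1)$ as well.

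The one genuine computation, and the step I expect to require the most care, is the maximality estimate itself. I would prove it by noting that, for a fixed value $\kappa(d)=m$, the sum is largest when the entries are consecutive, $\kappa(j)=m-(d-j)$, since each $\binom{\kappa(j)}{j}$ increases in $\kappa(j)$ and the chained constraints $\kappa(j)\le\kappa(j+1)-1$ are saturated in that configuration. Writing $T_d(m)=\sum_{j=1}^{d}\binom{m-d+j}{j}$ for this extremal value and reindexing gives the recursion $T_d(m)=\binom{m}{d}+T_{d-1}(m-1)$, so a short induction with base $T_1(m)=m=\binom{m+1}{1}-1$ closes via Pascal's rule to $T_d(m)=\binom{m}{d}+\binom{m}{d-1}-1=\binom{m+1}{d}-1$. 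Everything else is routine bookkeeping around the convention $\binom{a}{b}=0$ for $a<b$ and the monotonicity of binomial coefficients in the upper argument.
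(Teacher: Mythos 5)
Your proof is correct, but there is nothing in the paper to compare it against: the paper does not prove this lemma at all. It is imported verbatim, with citation, from Bruns--Herzog (Lemma 4.2.6 of \emph{Cohen--Macaulay Rings}), where it appears as the classical existence and uniqueness of the $d$-th Macaulay representation of an integer. Your argument is essentially the standard one found in that reference and elsewhere: a greedy choice of the top term $\kappa(d)$ maximal with $\binom{\kappa(d)}{d}\le\alpha$, induction on $d$ for both existence and uniqueness, and the key estimate $\sum_{j=1}^{d}\binom{\kappa(j)}{j}\le\binom{\kappa(d)+1}{d}-1$ to show that any admissible representation must use the greedy top term. The details all check out: Pascal's rule gives $\alpha-\binom{\kappa(d)}{d}<\binom{\kappa(d)}{d-1}$, which both forces $\kappa(d-1)<\kappa(d)$ in the existence step and, via your extremal computation $T_d(m)=\binom{m+1}{d}-1$, pins down $\kappa(d)$ in the uniqueness step; your explicit handling of the convention $\binom{a}{b}=0$ for $a<b$ (needed since the chain only requires $\kappa(1)\ge 0$, so trailing terms may vanish) is also correct and is a point that is easy to fumble. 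In short, your write-up would serve as a self-contained replacement for the citation; the paper's choice to cite rather than prove is simply economy, since the result is textbook material and plays only an auxiliary role in defining the operator $\alpha^{\langle d\rangle}$ used in Theorem \ref{gash} and the proof of Theorem \ref{theorem2}.
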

\begin{definition}
Let $\alpha \in \mathbb{N}$, $d \in \mathbb{Z}_{>0}$. Define
\begin{equation*}
    \alpha^{\left<d\right>}= \binom{\kappa(d)+1}{d+1}+\binom{\kappa(d-1)+1}{d-1+1}+\dots +\binom{\kappa(1)+1}{1+1},
\end{equation*}
with $\kappa(d),\dots,\kappa(1)$ as above. We set $0^{\left<d\right>}=0$.
\end{definition}

Gotzmann's persistence theorem can be extended to finitely generated modules over a standard-graded polynomial ring. The following theorem of Gasharov~\cite{gasharov1997extremal} exhibits Macaulay bound and persistence-type results for such modules.

\begin{theorem}[\cite{gasharov1997extremal}, Theorem 4.2]\label{gash}
    Let $R=k[x_0,\dots,x_n]$, and let \[F=R\xi_1 + \dots + R\xi_v\] be a free, finitely generated $R$-module with $l = \max(\deg(\xi_i) \mid i = 1,\dots,v)$. Let $N \subseteq F$ be a graded submodule and $M=F/N$. Then for any $d \geq l+1$ we have the following:
    \begin{enumerate}[\normalfont(i)]
        \item $\dimn_k M_{d+1} \leq (\dimn_k M_d)^{\left<d-l\right>}$,
        \item if $N$ is generated in degrees $\leq d$ and $\dimn_k M_{d+1} = (\dimn_k M_d)^{\left<d-l\right>}$ then $\dimn_k M_{d+2} = (\dimn_k M_{d+1})^{\left<d-l+1\right>}$.
    \end{enumerate}
\end{theorem}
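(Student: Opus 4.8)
The plan is to reduce the module statement to the classical Macaulay bound and Gotzmann persistence theorem for homogeneous ideals in $R=k[x_0,\dots,x_n]$, exploiting the fact that a \emph{monomial} submodule of a free module splits as a direct sum indexed by the generators. Write $F=\bigoplus_{i=1}^{v}R(-a_i)$ with $a_i=\deg\xi_i$, so $l=\max_i a_i$. First I would replace $N$ by its initial submodule $\mathrm{in}(N)$ with respect to a term order on $F$; this is a monomial submodule and $\dimn_k M_e$ is unchanged in every degree $e$. A monomial submodule decomposes as $\mathrm{in}(N)=\bigoplus_{i=1}^{v}I_i\,\xi_i$ for monomial ideals $I_i\subseteq R$, whence \[\dimn_k M_e=\sum_{i=1}^{v}\dimn_k(R/I_i)_{e-a_i}.\] The module Hilbert function is thus an honest sum of shifted ideal Hilbert functions, each governed by Macaulay's theorem for $I_i$.

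For part (i), I would apply the classical Macaulay bound to each $I_i$ in local degree $d-a_i\ge d-l\ge 1$, giving $\dimn_k(R/I_i)_{d+1-a_i}\le\big(\dimn_k(R/I_i)_{d-a_i}\big)^{\langle d-a_i\rangle}$, and then invoke two elementary combinatorial facts about the operation $\alpha\mapsto\alpha^{\langle e\rangle}$: that it is non-increasing in the superscript, so $\alpha^{\langle d-a_i\rangle}\le\alpha^{\langle d-l\rangle}$ since $d-a_i\ge d-l$; and that it is superadditive, $\sum_i\beta_i^{\langle e\rangle}\le\big(\sum_i\beta_i\big)^{\langle e\rangle}$. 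Writing $\beta_i=\dimn_k(R/I_i)_{d-a_i}$ and chaining these inequalities yields \[\dimn_k M_{d+1}\le\sum_i\beta_i^{\langle d-l\rangle}\le\Big(\sum_i\beta_i\Big)^{\langle d-l\rangle}=(\dimn_k M_d)^{\langle d-l\rangle},\] which is (i). Both combinatorial facts are routine manipulations of the Macaulay representation, but they carry the real content.

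For the persistence statement (ii) I would run the same chain backwards. The hypothesis $\dimn_k M_{d+1}=(\dimn_k M_d)^{\langle d-l\rangle}$ forces every inequality above to be an equality; in particular each $I_i$ must itself exhibit maximal Macaulay growth from degree $d-a_i$ to $d+1-a_i$, while the monotonicity and superadditivity equalities rigidly constrain the Macaulay representations of the $\beta_i$. Observing that a minimal generator of $I_i$ of ideal-degree $e$ corresponds to a generator of $\mathrm{in}(N)$ of module-degree $e+a_i$, the hypothesis that $N$ is generated in degrees $\le d$ should give that each $I_i$ is generated in degrees $\le d-a_i$; the classical Gotzmann persistence theorem then propagates maximal growth of $I_i$ into degree $d+2-a_i$. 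Reassembling via the same two lemmas, now in the tight regime, would give $\dimn_k M_{d+2}=(\dimn_k M_{d+1})^{\langle d-l+1\rangle}$, as required.

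The hard part will be part (ii), and specifically the control of generating degrees. Passing from $N$ to the monomial submodule $\mathrm{in}(N)$ can raise generating degrees, so the hypothesis that $N$ is generated in degrees $\le d$ does not transfer to the components $I_i$ for free. Resolving this requires either a Bayer--Stillman-type regularity bound showing that, under the maximal-growth hypothesis, the generic initial module $\gin(N)$ is still generated in degrees $\le d$, or else abandoning the passage to monomial modules in favour of an intrinsic argument via generic hyperplane restriction and induction on $n$. A secondary difficulty is extracting, from the single scalar equality in (i), the full system of equalities in the monotonicity and superadditivity steps and translating them back into structural statements about the $I_i$; this is the combinatorial bookkeeping that Gasharov's argument is built to handle.
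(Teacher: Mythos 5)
First, note that the paper never proves Theorem \ref{gash}: it is quoted from Gasharov \cite{gasharov1997extremal}*{Theorem 4.2} and used as a black box in the proof of Theorem \ref{theorem2}, so there is no internal argument to compare yours against; your proposal has to stand on its own. Part (i) of your plan does stand. Initial submodules preserve all graded dimensions, a monomial submodule of $F$ does split as $\bigoplus_{i=1}^{v}I_i\xi_i$ with each $I_i\subseteq R$ a monomial ideal, and, writing $\beta_i=\dimn_k(R/I_i)_{d-a_i}$, the chain
\[
\dimn_k M_{d+1}=\sum_{i}\dimn_k(R/I_i)_{d+1-a_i}\leq\sum_i\beta_i^{\left<d-a_i\right>}\leq\sum_i\beta_i^{\left<d-l\right>}\leq\Bigl(\sum_i\beta_i\Bigr)^{\left<d-l\right>}=(\dimn_k M_d)^{\left<d-l\right>}
\]
is legitimate: the componentwise Macaulay bound applies because $d-a_i\geq d-l\geq 1$, and the two combinatorial facts you invoke are both true (superadditivity, for instance, follows from Macaulay's theorem applied to the standard graded algebra obtained by gluing extremal algebras realising $\beta_i^{\left<d-l\right>}$ along their degree-zero pieces). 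These lemmas carry real content, as you say, but they are provable and the reduction is sound.

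The genuine gap is in part (ii), and it is the one you name in your final paragraph without closing. Persistence for each component requires $I_i$ to be generated in degrees $\leq d-a_i$, i.e.\ $\mathrm{in}(N)$ to be generated in degrees $\leq d$; the hypothesis of (ii) only controls the generators of $N$, and passing to an initial submodule genuinely can introduce higher-degree generators (already for ideals: with lex order, $I=(x^2-yz,\,xy)$ is generated in degree $2$ but $y^2z$ is a minimal generator of $\mathrm{in}(I)$). This is not a deferrable technicality: the generation hypothesis is exactly what separates persistence from the Macaulay bound, and statement (ii) is false without it --- for example $N=(x^{d+2})\subseteq k[x,y]$ exhibits maximal growth from degree $d$ to $d+1$ but not from $d+1$ to $d+2$ --- so a proof whose first step discards that hypothesis cannot be completed by bookkeeping. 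Your two suggested repairs (a regularity bound forcing a generic initial module to be generated in degrees $\leq d$ under maximal growth, or an intrinsic hyperplane-restriction induction) are precisely where the difficulty lives, and the second amounts to abandoning the proposed reduction altogether. Separately, even granting componentwise persistence, your reassembly ``in the tight regime'' is asserted rather than proved: you would still need that equality at level $d$ forces $\sum_i\gamma_i^{\left<d+1-a_i\right>}=\bigl(\sum_i\gamma_i\bigr)^{\left<d+1-l\right>}$ for $\gamma_i=\dimn_k(R/I_i)_{d+1-a_i}$, which requires characterising the equality cases of both combinatorial lemmas and showing that they propagate. As written, part (ii) is a plan with two open steps, not a proof.
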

For the rest of this section let $S=k[x_{1,0},\dots,x_{1,n_1},\dots,x_{s,0},\dots,x_{s,n_s}]$ be the Cox ring of $\mathbb{P}^{n_1} \times \dots \times \mathbb{P}^{n_s}$. 
\begin{lemma}\label{applygash}
    Let $R=k[x_{s,0},\dots,x_{s,n_s}]$ be the standard-graded polynomial ring in $n_s+1$ variables. Let $I \subseteq S$ be a homogeneous ideal, generated in degrees $\leq (a_1, \dots, a_s)$. Then for fixed $d_1, \dots, d_{s-1} \in \mathbb{N}$, we have that $M = \bigoplus_{d_s \in \mathbb{N}} (S/I)_{(d_1,\dots,d_s)}$ is a finitely generated R-module. Specifically, we can write $M \cong F/N$ where $F= R^v$ for some $v \in \mathbb{Z}_{\geq 1}$, and $N \subseteq F$ is a graded $R$-module generated in degrees $\leq a_s$.
\end{lemma}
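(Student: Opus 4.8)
The plan is to realise $M$ explicitly as a quotient of a free $R$-module by separating the variables of $S$ into the $R$-variables $x_{s,0},\dots,x_{s,n_s}$ and the remaining variables. I would begin by noting that every monomial of $S$ factors uniquely as $m = m'm''$, where $m'$ involves only variables $x_{i,j}$ with $i<s$ and $m'' \in R$; here the last coordinate of $\deg(m')$ is $0$, while $\deg(m'')$ is concentrated in the last coordinate. Fixing $(d_1,\dots,d_{s-1})$, let $\xi_1,\dots,\xi_v$ be the finitely many monomials of multidegree $(d_1,\dots,d_{s-1},0)$, so that $v = \prod_{i=1}^{s-1}\binom{n_i+d_i}{n_i} \geq 1$. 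The first step is to check that the map $R^v \to \bigoplus_{d_s \in \mathbb{N}} S_{(d_1,\dots,d_{s-1},d_s)}$ sending the $i$-th basis vector to $\xi_i$ is an isomorphism of graded $R$-modules, where $R^v$ is given all generators in degree $0$ and the $R$-grading records the last coordinate $d_s$. This is immediate from unique factorisation of monomials: each monomial of multidegree $(d_1,\dots,d_{s-1},d_s)$ equals $\xi_i m''$ for a unique index $i$ and a unique $m'' \in R$ of degree $d_s$. Thus $F := \bigoplus_{d_s} S_{(d_1,\dots,d_{s-1},d_s)} \cong R^v$ is free, and setting $N := \bigoplus_{d_s} I_{(d_1,\dots,d_{s-1},d_s)} \subseteq F$ gives $M \cong F/N$.

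It remains to bound the generating degrees of $N$. I would fix homogeneous generators $g_1,\dots,g_m$ of the ideal $I$ with $\deg(g_k) \leq (a_1,\dots,a_s)$ componentwise. Any homogeneous element of $I$ of multidegree $(d_1,\dots,d_{s-1},d_s)$ is an $S$-combination $\sum_k h_k g_k$; expanding each $h_k$ into monomials and factoring each monomial as $m'm''$ as above, I can rewrite every term as $m''(m'g_k)$ with $m'' \in R$ and $m'g_k \in I$. The crucial point is that $m'$ uses no $R$-variables, so the last coordinate of $\deg(m'g_k)$ equals that of $\deg(g_k)$, which is at most $a_s$; a degree count also shows $m'g_k$ has first $s-1$ coordinates exactly $(d_1,\dots,d_{s-1})$, so it genuinely lies in $N$. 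Hence every element of $N$ lies in the $R$-span of the homogeneous elements $m'g_k$ of $R$-degree $\leq a_s$, so $N$ is generated over $R$ in degrees $\leq a_s$. Since $R$ is Noetherian and $F$ is finitely generated, $M = F/N$ is automatically a finitely generated $R$-module.

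The routine parts are the bookkeeping with multidegrees and the verification of freeness; the one genuine subtlety is that, unlike in the special case of a monomial ideal, $N$ need not split as a direct sum $\bigoplus_i J_i\xi_i$ of ``column'' ideals. I would arrange the argument precisely so as to avoid needing any such splitting: it uses only that $F$ is free (which holds for the ambient slice regardless of $I$) together with the degree bound on the generators of $N$, and both of these survive without the monomial hypothesis. This is exactly the form required to feed $M \cong F/N$ into Theorem \ref{gash} with $l=0$.
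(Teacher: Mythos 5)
Your proposal is correct and follows essentially the same route as the paper's proof: both identify the slice $\bigoplus_{d_s} S_{(d_1,\dots,d_{s-1},d_s)}$ with $R^v$ via the monomial basis $\xi_1,\dots,\xi_v$ of $S_{(d_1,\dots,d_{s-1},0)}$, take $N$ to be the image of $\bigoplus_{d_s} I_{(d_1,\dots,d_s)}$, and bound its generating degrees by writing each coefficient of a combination $\sum_k h_k g_k$ as a sum of products $m''\,(m' g_k)$ with $m'' \in R$ and $m' g_k \in I_{(d_1,\dots,d_{s-1},e)}$, $e \leq a_s$. Your closing observation that the argument never needs a splitting $N = \bigoplus_i J_i \xi_i$ is exactly the point that lets the lemma apply to arbitrary (not just monomial) homogeneous ideals, which is also how the paper states it.
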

\begin{proof}
    Let $\xi_1,\dots,\xi_v$ be the monomial basis for $S_{(d_1,\dots,d_{s-1},0)}$. Set $F=R^v$ and observe that $\bigoplus_{d_s \in \mathbb{N}} S_{(d_1,\dots,d_s)} \cong F$ as $R$-modules, with the isomorphism given explicitly by the map \[r_1 \xi_1 + \dots + r_v \xi_v \mapsto (r_1,\dots,r_v) .\] Consider the submodule $\bigoplus_{d_s \in \mathbb{N}} I_{(d_1,\dots,d_s)} \subseteq \bigoplus_{d_s \in \mathbb{N}} S_{(d_1,\dots,d_s)}$, whose image under the isomorphism with $F$ is a graded submodule $N \subseteq F$. We have $M \cong F/N$, and consequently
    \[H_I(d_1,\dots,d_{s-1},u)= \dim_k (F/N)_{u}\] for all $u \geq 0$. It remains to show that $N$ is generated in degrees $\leq a_s$. We will show that $\bigoplus_{d_s=0}^{a_s} I_{(d_1,\dots,d_{s})}$ generates $\bigoplus_{d_s \in \mathbb{N}} I_{(d_1,\dots,d_s)}$ as an $R$-module. Let $I=(f_1,\dots,f_c)$ be a homogeneous generating set for $I$ with $\deg(f_i) \leq (a_1,\dots,a_s)$ for all $i$. Consider an arbitrary element $g=g_1f_1+\dots+g_cf_c \in I_{(d_1,\dots,d_s)}$ for some $d_s \in \mathbb{N}$. For a given $i$ let $\deg(f_i) = (e_{i1},\dots,e_{is})$. We have that $g_i \in S_{(d_1-e_{i1},\dots,d_s-e_{is})}$. We further write $g_i = h_1l_1 + \dots + h_{m_i}l_{m_i}$ for elements $h_j \in S_{(0,\dots,0,d_s-e_{is})}$ and $l_j \in S_{(d_1-e_{i1},\dots,d_{s-1}-e_{is-1},0)}$. Then $l_j f_i \in I_{(d_1,\dots,d_{s-1},e_{is})}$ for each $j$. It follows that for a given $i$ we can write $g_if_i$ as a sum, where each summand is an element in $I_{(d_1,\dots,d_{s-1},e_{is})}$ multiplied by an element of $R$. Since this holds for all $i$, and $e_{is} \leq a_s$ for all $i$, we observe that $\bigoplus_{d_s=0}^{a_s} I_{(d_1,\dots,d_s)}$ generates $\bigoplus_{d_s \in \mathbb{N}} I_{(d_1,\dots,d_s)}$ as an $R$-module. Consequently, the set $\{(r_1,\dots,r_v) \in F \mid r_1\xi_1 + \dots + r_v\xi_v \in \bigoplus_{d_s=0}^{a_s} I_{(d_1,\dots,d_s)} \}$ generates $N$ as an $R$-module. It follows that $N$ is generated in degrees $\leq a_s$.
    \end{proof}

Combining these results allows us to determine when a homogeneous ideal in $S$ has a constant Hilbert polynomial without needing to use results on multilex ideals as in \cite{ablett2024gotzmann}*{Theorem 1.1}.
\begin{proof}[Proof of Theorem \ref{theorem2}]
We proceed by induction. For the base case, fix $(b_1,\dots,b_{s-1}) \in \mathbb{N}^{s-1}$ with $b_i \in \{d_i,d_i+1\}$, and let $v = \dimn_k S_{(b_1,\dots,b_{s-1},0)}$. By Lemma \ref{applygash} there exists $N \subseteq R^v$ generated in degrees $\leq d_s$ such that for $M = R^v/N$ we have \[H_I(b_1,\dots,b_{s-1},u_s)=\dimn_k(M_{u_s})\] for all $u_s \geq 0$. Observe that in the context of Theorem \ref{gash} for $M$ we have $l=0$, and consequently that $d_s  \geq l+1$. Since $d_s \geq m$, $m = m^{\left<d_s\right>}$, and we have 
\begin{equation*}\label{eq1}
    \dimn_k(M_{d_s+1}) = H_I(b_1,\dots,b_{s-1},d_s+1) = m = m^{\left<d_s\right>} = H_I(b_1,\dots,b_{s-1},d_s)^{\left<d_s\right>} = \dimn_k(M_{d_s})^{\left<d_s\right>}.
\end{equation*}
Applying Theorem \ref{gash} we conclude that $\dimn_k(M_{u_s}) =m$ for all $u_s \geq d_s$. Consequently,
\begin{equation*}\label{eq2}
    H_I(b_1,\dots,b_{s-1},u_s)=m 
\end{equation*}
for all $u_s \geq d_s$ and any fixed $(b_1,\dots,b_{s-1}) \in \mathbb{N}^{s-1}$ with $b_i \in \{d_i,d_i+1\}$.

We now assume that 
\[H_I(b_1,\dots,b_{j},u_{j+1},\dots,u_s)=m\]
for some $j \in \{1,\dots,s-1\}$, and for all $(b_1,\dots,b_{j}) \in \mathbb{N}^{j}$ with $b_i \in \{d_i,d_i+1\}$ and $u_i \geq d_i$. We show that this implies
\[H_I(b_1,\dots,b_{j-1},u_{j},\dots,u_s)=m\] for all $(b_1,\dots,b_{j-1}) \in \mathbb{N}^{j-1}$ with $b_i \in \{d_i,d_i+1\}$ and $u_i \geq d_i$.

By assumption, we have 
\[H_I(b_1,\dots,b_{j-1},d_{j}+1,u_{j+1},\dots,u_s) = m = m^{\left<d_{j}\right>} = H_I(b_1,\dots,b_{j-1},d_{j},u_{j+1},\dots,u_s)^{\left<d_{j}\right>}\] for all $(b_1,\dots,b_{j-1}) \in \mathbb{N}^{j-1}$ with $b_i \in \{d_i,d_i+1\}$ and $u_i \geq d_i$. Again applying Lemma \ref{applygash} and Theorem \ref{gash}, now with $v = \dimn_kS_{(b_1,\dots,b_{j-1},0,u_{j+1},\dots,u_{s})}$, observe that 
\[H_I(b_1,\dots,b_{j-1},u_{j},\dots,u_s)=m\] for all $(b_1,\dots,b_{j-1}) \in \mathbb{N}^{j-1}$ with $b_i \in \{d_i,d_i+1\}$, $u_i \geq d_i$. Thus, by induction we obtain
\[H_I(u_1,\dots,u_s)=m\] when $u_i \geq d_i$ for all $i$, and consequently $P_I(t_1,\dots,t_s)=m$.

\end{proof}

\begin{bibdiv}
\begin{biblist}*{labels={alphabetic}}
\bibselect{references2}
\end{biblist}
\end{bibdiv}

\end{document}